\theoremstyle{plain}
\newtheorem{theorem}{Theorem}
\newtheorem{prop}[theorem]{Proposition}
\newtheorem{lemma}[theorem]{Lemma}
\newtheorem{cor}[theorem]{Corollary}
\theoremstyle{definition}
\newtheorem{definition}[theorem]{Definition}
\theoremstyle{remark}
\DeclareMathOperator{\Stab}{Stab}
\DeclareMathOperator{\Hom}{Hom}
\DeclareMathOperator{\Syl}{Syl}
\newcommand{\id}{\mathrm{id}}
\def\cF{\mathcal F}
\def\NN{\mathbb N}
\def\id{\mathrm{id}}
\tikzset{dot/.style={circle,fill=black,thick,inner sep=0pt,minimum size=1mm,draw}}
\tikzset{arrow/.style={semithick,>=stealth',shorten >=1pt,shorten <=1pt}}
\tikzset{equal/.style={arrow,double distance=2pt}}
\title{An observation on the module structure of block algebras}
\author{Matthew Gelvin}
\date{\today}
\begin{document}

\maketitle

\noindent{\bf Abstract.}  Let $B$ be a $p$-block of the finite group $G$.  We observe that the $p$-fusion of $G$ constrains the module structure of $B$:  Any basis of $B$ that is invariant under the left and right multiplications of a chosen Sylow $p$-subgroup $S$ of $G$ must in fact form a semicharacteristic biset for the fusion system on $S$ induced by $G$.  The parameterization of such semicharacteristic bisets can then be applied to relate the module structure and defect theory of $B$.\\

\noindent{\bf\S 0. Introduction.} 

\noindent Let $G$ be a finite group and $S$ a Sylow $p$-subgroup of $G$.  The left and right multiplicative actions of $S$ on $G$ give a partition of $G$ by double cosets $G=\coprod\limits_{i=1}^mSg_iS$, for some chosen set of representatives $\{g_i\}$.  Each double coset is a transitive $(S,S)$-biset, and thus this partition is the orbit decomposition of the $(S,S)$-biset $_SG_S$.

If $k$ is an algebraically closed field of characteristic $p$, the group algebra $kG$ decomposes as a direct sum of its block algebras:  $kG=B_0\oplus B_1\oplus\ldots\oplus B_n$.  Standard results, summarized in Proposition \ref{prop:key_facts}, imply that each  $B_j$ possesses a $k$-basis $X_j$ that is stable under left and right $S$-multiplication.  Such an \emph{$S$-invariant} $k$-basis is itself an $(S,S)$-biset, and the disjoint union $\coprod\limits_{j=1}^n X_j$ yields an $(S,S)$-biset abstractly isomorphic to the $S$-invariant $k$-basis $_SG_S$ of $kG$.  Thus the $\{X_j\}$ can be viewed as an $(S,S)$-partition of $_SG_S$ and we can group the $\{Sg_iS\}$ so as to recover the $(S,S)$-orbit decomposition of each $X_i$, with the proviso that none of this is canonical.  

Let us from now on focus on a particular block algebra $B$ with $S$-invariant $k$-basis $X$.  The purpose of this note is to show that the $(S,S)$-biset structure of $X$ is not arbitrary, in that it is to some extent controlled by $p$-fusion in $G$.  More precisely:

\begin{theorem}\label{thm:main}
If $X$ is an $S$-invariant  $k$-basis of the block algebra $B$ and $\cF=\cF_S(G)$ is the $G$-fusion system on $S$, then $X$ is an $\cF$-semicharacteristic $(S,S)$-biset.
\end{theorem}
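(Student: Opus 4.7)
The plan is to verify separately the two conditions defining an $\cF$-semicharacteristic biset: (i) every point-stabilizer of $X$ is a twisted diagonal $\Delta(P,\varphi) = \{(\varphi(u),u) : u \in P\}$ with $\varphi \in \cF(P,S)$, and (ii) for every $\varphi \in \cF(Q,S)$, the restricted $(Q,S)$-bisets ${}_\iota X$ and ${}_\varphi X$ (with left $Q$-action via inclusion, resp.\ via $\varphi$) are isomorphic.

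Condition (i) follows immediately from the abstract isomorphism $\coprod_j X_j \cong {}_SG_S$ recorded in the Introduction. Each $(S,S)$-orbit of ${}_SG_S$ is a double coset $Sg_iS$ whose stabilizer is the twisted diagonal $\Delta(S \cap \lc{g_i^{-1}}S, c_{g_i})$, and by construction $c_{g_i}$ is a morphism in $\cF = \cF_S(G)$. Every $(S,S)$-orbit of $X$ inherits this shape.

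The heart of the argument is (ii), and the key observation is that the block idempotent $e$ defining $B$ is central in $kG$. Hence $ge = eg$ for every $g \in G$, and consequently $gB = g \cdot ekG = e \cdot gkG = ekG = B$. Left multiplication $\lambda_g : B \to B$, $b \mapsto gb$, is therefore a well-defined $k$-linear automorphism of $B$ for every $g \in G$. Given $\varphi = c_g \in \cF(Q,S)$, one verifies by direct calculation that
\[
\lambda_g(u \cdot b \cdot s) = gubs = (gug^{-1})(gb)s = \varphi(u) \cdot \lambda_g(b) \cdot s
\]
for all $u \in Q$, $s \in S$, so $\lambda_g$ provides an isomorphism ${}_\iota B \xrightarrow{\sim} {}_\varphi B$ of $(kQ,kS)$-bimodules.

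What remains --- and the step I expect to require the most care --- is to promote this bimodule isomorphism to the desired biset isomorphism ${}_\iota X \cong {}_\varphi X$. Both ${}_\iota B$ and ${}_\varphi B$ are $p$-permutation $k[Q \times S]$-modules admitting $X$ as a $(Q \times S)$-invariant basis (invariance under the $\varphi$-twisted action uses only that $\varphi(u) \in S$ and that $X$ is left-$S$-invariant). Since $Q \times S$ is a $p$-group and $k$ has characteristic $p$, every $p$-permutation $k[Q \times S]$-module decomposes uniquely via Krull--Schmidt into indecomposable transitive permutation summands of the form $k[(Q \times S)/\Delta(P,\psi)]$, so the underlying $(Q \times S)$-set of an invariant basis is determined up to isomorphism by the module alone. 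Applying this uniqueness to ${}_\iota B \cong {}_\varphi B$ produces ${}_\iota X \cong {}_\varphi X$ as $(Q,S)$-bisets, establishing (ii). The delicate point is the rigorous formulation of ``an invariant basis of a $p$-permutation module over a $p$-group is determined up to $G$-set isomorphism''; this is standard $p$-permutation (trivial source) theory but must be invoked or spelled out carefully, ensuring that the indecomposable summands are exactly enumerated by twisted diagonals.
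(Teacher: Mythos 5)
Your proposal correctly handles bifreeness, $\cF$-generation, and $\cF$-invariance, and for the last of these your argument (left multiplication by $g$ gives a $(kQ,kS)$-bimodule isomorphism ${}_\iota B\cong{}_\varphi B$, then invoke uniqueness of invariant bases of $p$-permutation modules over the $p$-group $Q\times S$) is essentially the same as the paper's Proposition~\ref{prop:F_stability_of_block_basis}. However, you have omitted one of the four defining conditions of an $\cF$-semicharacteristic biset: \emph{symmetry}, i.e.\ $X\cong X^\circ$ as $(S,S)$-bisets, where $X^\circ$ has actions $s_1\odot x\odot s_2=s_2^{-1}\cdot x\cdot s_1^{-1}$. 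This is Condition~\ref{cond:ii} in the paper's definition, and it does not follow from the other three nor from your observations about point-stabilizers and $\cF$-invariance.

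Establishing symmetry of $X$ takes genuine work and occupies Proposition~\ref{prop:invariant_bases_of_symmetric_algebra_are_symmetric} and Lemma~\ref{ref:corner_algebras_of_symmetric_algebras_are_symmetric} in the paper. The argument is: $kG$ is a symmetric $k$-algebra under the form $\lambda\bigl(\sum_g\alpha_g g\bigr)=\alpha_1$; since $B=bkGb$ is a corner algebra of $kG$ by a central idempotent, $B$ inherits a symmetrizing form; then for any $S$-invariant basis $Y$ of a symmetric interior $p$-permutation $S$-algebra one constructs the ``dual'' basis $\check Y$ via $\lambda(\check y_i y_j)=\delta_{ij}$, checks directly that $u_1\cdot\check y_i\cdot u_2=(u_2^{-1}\cdot y_i\cdot u_1^{-1})^{\vee}$, so that $\check Y\cong Y^\circ$ as bisets, and finally invokes uniqueness of invariant bases (Proposition~\ref{prop:key_facts}\ref{proppart:invariant_bases_well_defined}) to conclude $Y\cong\check Y\cong Y^\circ$. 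You would need to add this entire strand to make the proof complete.
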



Before we begin to prove Theorem \ref{thm:main}, we verify that our claim regarding the partition of $_SG_S$ in terms of bases of block algebras holds, which amounts to proving the existence and uniqueness, as an $(S,S)$-biset, of an $S$-invariant $k$-basis for $B$ (Proposition \ref{prop:block_invariant_basis}).  This follows from the basic theory of $p$-permutation modules, summarized in \S1.

In \S2 we give the definition of $\cF$-semicharacteristic biset and related notions.

In \S3 we prove Theorem \ref{thm:main}.  In doing so, we make use of the relationship between $G$ and the algebra  structure of $B$.  If $b\in Z(kG)$ is the block idempotent corresponding to  $B$, every  $g\in G$ commutes with $b$, and so $(g\cdot b)(g^{-1}\cdot b)=b$.  As $b$ is the identity element of  $B$,  the assignment $g\mapsto g\cdot b$ yields a group map $G\to B^\times$.  This makes $B$ an \emph{interior $G$-algebra}.  

We use this fact repeatedly and without further comment beyond a point on notation:   Multiplication in our algebras is indicated by concatenation of elements, while the symbol ``$\ \cdot\ $''  is reserved for  the action of an element of a group on an element of an algebra.  (We will occasionally use ``$\ \odot\ $'' for the same, when multiple  group actions must be compared.)  For $g\in G$, we  write $\overline g$ for the image of $g$ in the unit group of an interior $G$-algebra $A$, so that by definition $g\cdot a=\overline ga$ for all $a\in A$.

Finally, in \S4 we combine Theorem \ref{thm:main} with the parameterization of $\cF$-semicharacteristic bisets from \cite{GelvinReehMinimalCharacteristicBisets}  to impose structural constraints on the underlying $(kS,kS)$-bimodule of $B$.  We also use the main observation to give new perspectives on a few basic results in the theory of defect groups of blocks.

Thanks are due to Laurence Barker and Justin Lynd, whose independent collaborations with the author suggested the main result of this note as a observation of potential interest.\\

\noindent{\bf \S1.  $p$-permutation modules.}

\noindent Let $H$ be a finite group and $M$ a finite dimensional $kH$-module. $M$ is a \emph{$p$-permutation $H$-module} if for any $p$-subgroup $P\leq H$, $M$ possesses a $k$-basis $Y=Y_P$ that is invariant under the action of $P$.  Thus $Y$ is a finite $P$-set and as such can be written 
$$Y=\coprod\limits_{[Q]\leq P} c_Q(Y)\cdot [P/Q],$$
 where the coproduct is index by the $P$-conjugacy classes of subgroups of $P$, $c_Q(Y)\in\NN$, and $[P/Q]$ denotes the transitive $P$-set having a point with stabilizer $Q$. As $kP$-modules, 
 $$M\cong\bigoplus\limits_{[Q]\leq P}c_Q(Y)\cdot k[P/Q].$$
  A result of Green \cite[Lemma 2.3a]{GreenBlocksOfModularRepresentations} states that each $k[P/Q]$ is indecomposable as a $kP$-module, and that moreover if $k[P/Q]\cong k[P/R]$ as $kP$-modules then $P/Q\cong P/R$ as $P$-sets.  The Krull-Schmidt Theorem then implies:
  
\begin{prop}\label{prop:key_facts}
Let $H$ be a finite group, $M$ a $p$-permutation $kH$-module, and $P\leq H$ a $p$-subgroup.
\begin{enumerate}
\item\label{proppart:invariant_bases_well_defined} If $Y$ and $Y'$ are $P$-invariant $k$-bases of $M$, then $Y\cong Y'$ as $P$-sets.
\item\label{proppart:summands} If $N$ is a direct summand of $M$, then $N$ is a $p$-permutation $kH$-module.
\item\label{proppart:subsets} If $N$ is a direct summand of $M$ and $Z\subset N$, $Y\subset M$ are $P$-invariant $k$-bases, then $Z$ is isomorphic to a $P$-subset of $Y$.
\end{enumerate} 
\end{prop}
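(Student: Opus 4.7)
All three parts reduce mechanically to the two ingredients the author has already assembled: Green's lemma, which says that each $k[P/Q]$ is indecomposable and that distinct classes $[Q]\leq P$ give non-isomorphic $kP$-modules, together with the Krull--Schmidt theorem applied to $M|_P$. For (i), I would write two $P$-invariant $k$-bases $Y$ and $Y'$ via their orbit decompositions as
\[ M|_P \;\cong\; \bigoplus_{[Q]\leq P}c_Q(Y)\cdot k[P/Q] \;\cong\; \bigoplus_{[Q]\leq P}c_Q(Y')\cdot k[P/Q]; \]
Green's lemma makes these two decompositions of the same module into pairwise non-isomorphic indecomposables, so Krull--Schmidt forces $c_Q(Y)=c_Q(Y')$ for every $[Q]$. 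This equality is precisely the statement that $Y\cong Y'$ as $P$-sets.

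For (ii), I would fix a $p$-subgroup $P\leq H$ and a $P$-invariant $k$-basis $Y$ of $M$. If $M=N\oplus N'$ as $kH$-modules, restriction gives $M|_P=N|_P\oplus N'|_P$. Further decomposing $N|_P$ and $N'|_P$ into indecomposable $kP$-summands produces a second decomposition of $M|_P$ into indecomposables, and Krull--Schmidt matches these summands against those from the orbit decomposition of $Y$. Hence every indecomposable summand of $N|_P$ is isomorphic to some $k[P/Q]$; assembling the transitive $P$-set bases $[P/Q]$ of these summands produces a $P$-invariant $k$-basis of $N|_P$. Since $P$ was arbitrary, $N$ is a $p$-permutation $kH$-module. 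Part (iii) then follows immediately: apply (ii) to the complement $N'$ to obtain a $P$-invariant basis $Z'$, so that $Z\sqcup Z'$ is a $P$-invariant basis of $M$, and (i) gives $Z\sqcup Z'\cong Y$ as $P$-sets, whence $Z$ is isomorphic to a $P$-subset of $Y$.

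No step is genuinely hard once Green's lemma is granted: the entire content of the proposition is that the $P$-orbit type of an invariant basis is an invariant of $M|_P$ alone, which is exactly what Krull--Schmidt records. The only subtlety worth flagging is that in (iii) the $P$-embedding of $Z$ into $Y$ is abstract and depends on the choice of complement $N'$ and of basis $Z'$; there is no claim of a canonical inclusion $Z\subseteq Y$ inside $M$.
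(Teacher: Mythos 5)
Your proposal is correct and follows exactly the route the paper intends: the paper states Green's lemma (each $k[P/Q]$ is indecomposable and determines $[Q]$) and then simply asserts that Krull--Schmidt implies the proposition, leaving the details implicit. Your argument just unpacks that implication, so there is no substantive difference between the two.
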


We  apply these results to the $(G\times G)$-module $kG$, with action  $(g_1,g_2)\cdot x=g_1\cdot x\cdot g_2^{-1}$.  The natural $k$-basis $G$ of $kG$ is clearly invariant under this action, hence under the restricted action to any $p$-subgroup of $G\times G$.  Thus $kG$ is a $p$-permutation $(G\times G)$-module.

If $b$ is a block idempotent of $kG$, we have $kG\cong kGb\oplus kG(1-b)$ as $kG$-modules.  In particular, the corresponding block algebra $B=kGb$ is a direct summand of $kG$, so $B$ is a $p$-permutation $G$-module by Proposition \ref{prop:key_facts}\ref{proppart:summands}.  In particular, for $S\in\Syl_p(G)$, there is an $(S\times S)$-invariant $k$-basis $X$ of $B$.  The $(S\times S)$-action on $X$ is equivalent to endowing $X$ with the structure of an $(S,S)$-biset. We will freely move between these notions without comment.  When viewed as an $(S,S)$-biset, $X$ is our $S$-invariant $k$-basis of $B$.


We summarize the implications Proposition \ref{prop:key_facts}\ref{proppart:invariant_bases_well_defined}-\ref{proppart:subsets}:

\begin{prop}\label{prop:block_invariant_basis}
If $B$ is a block algebra of $kG$ and $S\in\Syl_p(G)$, then $B$ possesses an $S$-invariant $k$-basis $X$.  Such an $S$-invariant $k$-basis is uniquely determined up to isomorphism of $(S,S)$-bisets.  Moreover, $X$ is isomorphic to a $(S,S)$-subbiset of ${_SG_S}$.
\end{prop}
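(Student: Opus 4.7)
The plan is to derive each of the three claims directly from Proposition \ref{prop:key_facts}, applied to the group $H = G \times G$ and the $p$-subgroup $P = S \times S$, using the standard identification between $(S \times S)$-invariant $k$-bases and $(S,S)$-bisets.

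First I would observe that $kG$ is a $p$-permutation $k[G \times G]$-module under the action $(g_1, g_2)\cdot x = g_1 x g_2^{-1}$: the natural basis $G \subseteq kG$ is closed under this action, hence under the restricted action of any $p$-subgroup of $G \times G$. Next, because the block idempotent $b$ lies in $Z(kG)$, the splitting $kG = kGb \oplus kG(1-b)$ is a decomposition of $(kG, kG)$-bimodules, and therefore of $k[G \times G]$-modules. Thus $B = kGb$ is a $k[G\times G]$-direct summand of $kG$, and Proposition \ref{prop:key_facts}\ref{proppart:summands} shows that $B$ is itself a $p$-permutation $k[G \times G]$-module.

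With this in hand, existence of an $(S,S)$-invariant $k$-basis $X$ of $B$ is immediate from the definition of $p$-permutation applied to the $p$-subgroup $S \times S \leq G \times G$. Uniqueness up to $(S,S)$-biset isomorphism is Proposition \ref{prop:key_facts}\ref{proppart:invariant_bases_well_defined} specialized to the same $p$-subgroup. The embedding of $X$ as an $(S,S)$-subbiset of ${}_S G_S$ follows from Proposition \ref{prop:key_facts}\ref{proppart:subsets}, applied to the direct summand $B$ of $kG$ together with the $(S \times S)$-invariant bases $X$ of $B$ and $G$ of $kG$.

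The only substantive point to check is that the block decomposition is genuinely a splitting of two-sided modules, which rests on the centrality of $b$; beyond this the proof is a direct translation between the language of $(S \times S)$-sets and that of $(S,S)$-bisets. I therefore do not expect any significant obstacle: the proposition is essentially a repackaging of Proposition \ref{prop:key_facts} for the particular $(kG, kG)$-bimodule setting that will be used throughout the rest of the paper.
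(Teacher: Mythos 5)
Your proposal follows the paper's own proof essentially step for step: view $kG$ as a $p$-permutation $k[G\times G]$-module with the natural basis $G$, observe that $B$ is a direct summand, and then apply the three parts of Proposition~\ref{prop:key_facts} to the $p$-subgroup $S\times S$. The one place you are slightly more careful than the paper is in making explicit that centrality of $b$ is what makes $kG = kGb \oplus kG(1-b)$ a splitting of $(kG,kG)$-bimodules (equivalently, $k[G\times G]$-modules) rather than merely of one-sided modules; this is a worthwhile clarification but does not change the argument.
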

\newpage
\noindent{\bf \S2.  $\cF$-semicharacteristic bisets}. 

\noindent Let $\cF=\cF_S(G)$ be the fusion system on $S$ induced by $G$: $\cF$ is the category whose objects are the subgroups of $S$ and whose homsets are given by 
\[
\cF(P,Q)=\Hom_G(P,Q)=\left\{\varphi:P\to Q\ \big|\ \exists\ g\in G\textrm{ such that }\varphi=c_g|_P\right\},
\]
where $c_g:G\to G:x\mapsto gxg^{-1}$ is (left) conjugation by $g$.  An $\cF$-characteristic biset is an abstraction of the natural $(S,S)$-biset ${_SG_S}$ that controls the structure of $\cF$. 

Some basic terminology is needed to give the full definition.  Let $\Omega$ be a finite $(S,S)$-biset.

The \emph{opposite biset} of $\Omega$ is the $(S,S)$-biset $\Omega^\circ$ whose underlying set is $\Omega$ and whose left and right $S$-actions are given by $s_1\odot\omega\odot s_2=s_2^{-1}\cdot\omega\cdot s_1^{-1}$.  We say $\Omega$ is \emph{symmetric} if $\Omega\cong\Omega^\circ$ as $(S,S)$-bisets.

The \emph{point-stabilizer} of $\omega\in\Omega$ is the $S\times S$-stabilizer of $\omega$.  If $\Omega$ is thought of as an $(S,S)$-biset, this is the subgroup of $S\times S$ defined by $\Stab(\omega):=\left\{(s_1,s_2)\in S\times S\ \big|\ s_1\cdot\omega=\omega\cdot s_2\right\}$.  If $P\leq S$ and $\varphi:P\hookrightarrow S$ is a group monomorphism, the \emph{twisted diagonal subgroup} defined by $P$ and $\varphi$  is  $(\varphi,P):=\left\{(\varphi(u),u)\ \big|\ u\in P\right\}\leq S\times S$.  
 
 $\Omega$ is \emph{bifree} if the left and right $S$-actions on $\Omega$ are individually free.
 If $\Omega$ is bifree, then every $\omega\in\Omega$ has a twisted diagonal subgroup as its point-stabilizer; we shall write $(c_\Omega,S_\Omega)$ for $\Stab(\omega)$ in this case.  This notation comes from the example $\Omega={_SG_S}$, where an element $g\in {_SG_G}$ has point-stabilizer $(c_g,S_g)$, for $S_g:=S\cap{S^g}$ the largest subgroup of $S$ left-conjugated into $S$ by $g$.
 
 If $P\leq S$, let ${_P\Omega_S}$ be the $(P,S)$ biset whose left $P$-action comes from restriction of the  left $S$-action.   If  $\varphi:P\hookrightarrow S$ is a group monomorphism, ${_P^\varphi\Omega_S}$ is the $(P,S)$-biset whose left $P$-action is realized by first twisting by  $\varphi$:  For all $u\in P$, $s\in S$, and $\omega\in\Omega$, set $u\odot\omega\odot s:=\varphi(u)\cdot\omega\cdot s$. The $(S,P)$-bisets $_S\Omega_P$ and ${_S\Omega_P^\varphi}$ are defined similarly.

We can now give the precise definition of $\cF$-(semi)characteristic bisets.  This notion is due to Linckelmann and Webb, who formulated it in terms of abstract fusion systems.  As we  deal only with fusion system realized by finite groups, we shall make no further commentary on the more general situation.  See, e.g., \cite{AKO} for the complete picture.

\begin{definition}
Let $\cF$ be a saturated fusion system on the $p$-group $S$ and let $\Omega$ be an $(S,S)$-biset.
\begin{itemize}
\item $\Omega$ is \emph{$\cF$-generated} if for all $\omega\in\Omega$ with point-stabilizer $(c_\omega,S_\omega)$, the group map $c_\omega:S_\omega\to S$ satisfies $c_\omega\in\cF(S_\omega,S)$.
\item $\Omega$ is \emph{$\cF$-invariant} if for all $P\leq S$ and $\varphi\in\cF(P,S)$, ${_P^\varphi \Omega_S}\cong{_P\Omega_S}$ as $(P,S)$-bisets and ${_S\Omega_P^\varphi}\cong{_S\Omega_P}$ as $(S,P)$-bisets.
\item $\Omega$ is \emph{$\cF$-semicharacteristic} if
\begin{enumerate}
\item\label{cond:i} $\Omega$ is bifree,
\item\label{cond:ii} $\Omega$ is symmetric,
\item\label{cond:iii} $\Omega$ is $\cF$-generated, and
\item\label{cond:iv} $\Omega$ is $\cF$-invariant.
\end{enumerate}
\item $\Omega$ is \emph{$\cF$-characteristic} if $\Omega$ is $\cF$-semicharacteristic and in addition satisfies
\begin{enumerate}[resume]
\item\label{cond:v} $|\Omega|/|S|$ is prime to $p$.  
\end{enumerate}
\end{itemize}
\end{definition}

The existence of $\cF$-characteristic bisets for saturated fusion systems was shown in, e.g., \cite[Proposition 5.5]{BLO2}.  Moreover, the converse that if an $\cF$-characteristic biset exists then $\cF$ must be saturated was proved in \cite[Proposition 21.9]{PuigBook}, albeit with different terminology.

Observe that the ur-example $_SG_S$ is indeed an $\cF=\cF_S(G)$-characteristic biset:
\begin{enumerate}
\item $_SG_S$ is bifree as an $(S,S)$-biset as both left and right multiplication in a group are invertible operations.
\item $_SG_S$ is symmetric via the inversion map $g\mapsto g^{-1}$.
\item An element $g\in G$ has point-stabilizer $(c_g,S_g)$, and $c_g\in\cF(S_g,S)$ by definition.
\item If $\varphi\in\cF(P,S)$ is induced by $g$, left multiplication by $g$ yields ${_PG_S}\cong{_P^\varphi G_S}$ and right multiplication by $g^{-1}$ yields ${_SG_P}\cong{_SG_P^\varphi}$, so $_SG_S$ is $\cF$-invariant. 
\item $|G|/|S|$ is prime to $p$ as $S$ is a Sylow $p$-subgroup of $G$.
\end{enumerate}

The proof that an $S$-invariant $k$-basis $X$ of $B$ is $\cF$-semicharacteristic will similarly amount to a checklist verification, except that Condition \ref{cond:v} will not  hold in general.   Indeed, in \S4 we will see that an $S$-invariant $k$-basis of $B$ is $\cF$-characteristic if and only if $B$ has maximal defect, i.e., $S$ is a defect group of $B$.\\

\noindent{\bf \S3.  The proof of Theorem \ref{thm:main}.}

\noindent Only Conditions \ref{cond:ii} and \ref{cond:iv} in the definition of $\cF$-semicharacteristic biset are not obvious for $X$.  We prove these separately in the following two propositions.


\begin{prop}\label{prop:F_stability_of_block_basis}
An $S$-invariant $k$-basis $X$ of the block algebra $B$ is $\cF$-invariant.
\begin{proof}
Let $P\leq S$ and $\varphi\in\cF(P,S)$ be given.  We  show ${_P^\varphi X_S}\cong{_PX_S}$ as $(P,S)$-bisets; the proof that ${_SX_P^\varphi}\cong{_SX_P}$ as $(S,P)$-bisets follows the same argument.

Fix $g\in G$ inducing $\varphi\in\cF(P,S)$, so $gug^{-1}=\varphi(u)$ for all $u\in P$.  Set $X'=g^{-1}\cdot X$.  Since $B$ is an interior $G$-algebra, we have $g^{-1}\cdot X=\overline g^{-1}X$, so $X'$ is a $k$-basis of $B$ multiplied by a unit.  In particular, $X'$ is also a $k$-basis of $B$.

For any $g^{-1}\cdot x\in X'$ and $u\in P$, we have $u\cdot(g^{-1}\cdot x)=g^{-1}\cdot(\varphi(u)\cdot x)$.  As $\varphi(u)\in S$ and $X$ is $S$-invariant, we conclude that $u\cdot X'=X'$, and thus $X'$ is a $P\times S$-invariant $k$-basis of $B$.  (That $X'$ is invariant under the right $S$-action is obvious.)  Proposition \ref{prop:key_facts}\ref{proppart:invariant_bases_well_defined}  implies that $X\cong X'$ as $(P,S)$-bisets, say via some bijection $f:X\to X'$.  Then the composite bijection $F:X\to X'\to X:x\mapsto f(x)\mapsto g\cdot f(x)$ satisfies, for all $u\in P$ and $x\in X$,
\[
F(u\cdot x)=g\cdot f(u\cdot x)=gu\cdot f(x)=\varphi(u)\cdot(g\cdot f(x))=\varphi(u)\cdot F(x).
\]
Again it is obvious that $F(x\cdot s)=F(x)\cdot s$ for all $s\in S$.  Thus $F$ is an isomorphism of $(P,S)$-bisets ${_PX_S}\cong{_P^\varphi X_S}$, so $X$ is $\cF$-invariant.
\end{proof}
\end{prop}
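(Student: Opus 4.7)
The plan is to exploit that $B$ is an interior $G$-algebra, so every $g \in G$ acts on $B$ as left multiplication by a unit $\bar g \in B^\times$; translating the basis $X$ by $\bar g^{-1}$ then produces a second $k$-basis of $B$ that we can compare with $X$ using the uniqueness clause of Proposition \ref{prop:key_facts}\ref{proppart:invariant_bases_well_defined}.

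Concretely, I would fix $g \in G$ with $c_g|_P = \varphi$ and set $X' := \bar g^{-1} X$. Right $S$-invariance of $X'$ is immediate, since right multiplication by $S$ commutes with left multiplication by $\bar g^{-1}$. For the left action, the conjugation identity $u \bar g^{-1} = \bar g^{-1} \varphi(u)$ in $B^\times$ shows that left multiplication by $u \in P$ on $X'$ corresponds to left multiplication by $\varphi(u) \in S$ on $X$; since $X$ is $S$-invariant and $\varphi(u) \in S$, the set $X'$ is therefore left $P$-invariant. Thus $X'$ is a second $(P,S)$-invariant $k$-basis of $B$.

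Now both $X$ and $X'$ are $(P,S)$-invariant $k$-bases of the same $p$-permutation module $B$, so Proposition \ref{prop:key_facts}\ref{proppart:invariant_bases_well_defined} yields an abstract $(P,S)$-biset isomorphism $X' \cong X$. On the other hand, the concrete translation map $x \mapsto \bar g^{-1} x$ is, by the very same identity $u \bar g^{-1} = \bar g^{-1} \varphi(u)$, an isomorphism of $(P,S)$-bisets from ${_P^\varphi X_S}$ to ${_P X'_S}$. Composing these two gives the desired ${_P^\varphi X_S} \cong {_P X_S}$, and the side ${_S X_P^\varphi} \cong {_S X_P}$ is handled symmetrically, using right translation by $\bar g$ instead.

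The argument is essentially bookkeeping once the interior $G$-algebra perspective is in place; I do not anticipate a real obstacle beyond getting the direction of the twist correct---i.e., verifying that translation by $\bar g^{-1}$, and not by $\bar g$, realizes the $\varphi$-twist rather than the $\varphi^{-1}$-twist on the left.
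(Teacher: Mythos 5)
Your proposal is correct and follows essentially the same route as the paper: translate $X$ by $\bar g^{-1}$ to produce a second $(P,S)$-invariant basis $X'$, invoke Proposition \ref{prop:key_facts}\ref{proppart:invariant_bases_well_defined} to get an abstract $(P,S)$-biset isomorphism $X' \cong X$, and compose with the explicit translation (which realizes the $\varphi$-twist via $u\bar g^{-1} = \bar g^{-1}\overline{\varphi(u)}$) to obtain ${_P^\varphi X_S} \cong {_P X_S}$. The paper packages the composition as the single map $F:x\mapsto g\cdot f(x)$ and verifies the intertwining directly, but that is the same argument, and your choice of $\bar g^{-1}$ on the left and $\bar g$ on the right for the two directions is the correct one.
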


In order to prove the symmetry of  $X$, we first make a small detour.

Let $A$ be a finite dimensional $k$-algebra.  A \emph{symmetrizing form} is a $k$-linear map ${\lambda:A\to k}$ whose kernel contains no nontrivial left (or right) ideals and such that $\lambda(a_1a_2)=\lambda(a_2a_1)$ for all $a_1,a_2\in A$.  If $A$ possesses a symmetrizing form, $A$ is a \emph{symmetric $k$-algebra}.  Equivalently, $A$ is a  symmetric $k$-algebra if the regular $(A,A)$-bimodule $_AA_A$ is isomorphic to its linear dual $A^\ast:=\Hom_k(A,k)$ as $(A,A)$-bimodules.  (See, e.g., \cite[\S1.6]{ThevenazBook} for a review of this standard material, and a more general version of Lemma \ref{ref:corner_algebras_of_symmetric_algebras_are_symmetric} below.)

If $H$ is a finite group, the $k$-algebra $A$ is an \emph{interior $p$-permutation $H$-algebra} if $A$ is an interior $H$-algebra and  for any $p$-subgroup $P\leq H$, $A$ possesses a $k$-basis $Y=Y_P$ that is invariant under the left and right $P$-action.

\begin{prop}\label{prop:invariant_bases_of_symmetric_algebra_are_symmetric}
Let $A$ be a symmetric interior $p$-permutation  $H$-algebra.  If $P\leq H$ is a $p$-subgroup and $Y$ is a $P$-invariant $k$-basis of $A$, then $Y$ is symmetric as a $(P,P)$-biset.
\begin{proof}
For ease of expression, enumerate  the elements of the $k$-basis: $Y=\{y_1,y_2,\ldots,y_n\}$.  Let $Y^*=\{y_1^*,y_2^*,\ldots,y_n^*\}$ be the dual basis of $A^*$ with respect to $Y$, i.e., $y_i^*(y_j)=\delta_{ij}$.

Let $\lambda$ be the symmetrizing form of $A$.  For each any $a\in A$, let $\lambda_a:A\to k$ be the linear functional $\lambda_a:a'\mapsto\lambda(aa')$.  Clearly the assignment $a\mapsto \lambda_a$ defines a $k$-linear map $A\to A^*$.  Moreover, if $\lambda_a$ is the trivial functional, then $\lambda(Aa)=\lambda(aA)=0$, or the left ideal $Aa$ is contained in the kernel of $\lambda$.  The assumption that $\ker\lambda$ contains no nontrivial left ideals forces $a=0$, so $\lambda_-:A\to A^*$ is a $k$-injection.  As $A$ is finite dimensional over $k$, we conclude that $\lambda_-$ is a $k$-isomorphism.  

Thus, for each $y_i\in Y$, there is a unique $\check y_i\in A$ such that $\lambda_{\check y_i}=y_i^*$.  In other words, $\check y_i$ is defined by $\lambda(\check y_iy_j)=\delta_{ij}$.  Let $\check Y=\{\check y_1,\check y_2,\ldots,\check y_n\}$.  Clearly $\check Y$ is a $k$-basis for $A$.

Consider now, for $u_1,u_2\in P$, $\check y_i\in\check Y$, and $y_j\in Y$, we have
\[
\lambda((u_1\cdot\check y_i\cdot u_2)y_j)=\lambda(\overline u_1\check y_i\overline u_2y_j)=\lambda(\check y_i\overline u_2 y_j\overline u_1)=\lambda(\check y_i(u_2\cdot y_j\cdot u_1)).
\]
As $Y$ is $P$-invariant, $u_2\cdot y_j\cdot u_1\in Y$, so the above common value is $1$ if $u_2\cdot y_j\cdot u_1=y_i$, or equivalently $y_j=u_2^{-1}\cdot y_i\cdot u_1^{-1}$, and $0$ otherwise. This implies that
\[
u_1\cdot \check y_i\cdot u_2=(u_2^{-1}\cdot y_i\cdot u_1^{-1})^\vee\in\check Y.
\]
This shows both that $\check Y$ is $P$-invariant and that $\check Y\cong Y^\circ$ as $(P,P)$-bisets.

As $\check Y$ is a $P$-invariant $k$-basis of $A$, Proposition \ref{prop:key_facts}\ref{proppart:invariant_bases_well_defined} implies $Y\cong \check Y$ as $(P,P)$-bisets as well.  Combining these isomorphisms gives $Y\cong Y^\circ$, so $Y$ is a symmetric $(P,P)$-biset.
\end{proof}
\end{prop}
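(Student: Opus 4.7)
The plan is to exhibit a second $P$-invariant $k$-basis $\check Y\subseteq A$ that manifestly realizes the opposite $(P,P)$-biset $Y^\circ$, and then invoke the uniqueness of $P$-invariant bases (Proposition \ref{prop:key_facts}\ref{proppart:invariant_bases_well_defined}) to conclude $Y\cong\check Y\cong Y^\circ$.

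To construct $\check Y$, I will use the symmetrizing form $\lambda$ to set up a canonical $k$-linear isomorphism $\Phi\colon A\xrightarrow{\sim} A^\ast$ defined by $\Phi(a)(b):=\lambda(ab)$. Injectivity of $\Phi$ follows from the assumption that $\ker\lambda$ contains no nontrivial left ideal: if $\Phi(a)=0$, then $Aa\subseteq\ker\lambda$, forcing $a=0$. A dimension count upgrades this to a bijection. Writing $Y=\{y_1,\ldots,y_n\}$ and letting $Y^\ast=\{y_1^\ast,\ldots,y_n^\ast\}\subseteq A^\ast$ be the dual basis, I set $\check y_i:=\Phi^{-1}(y_i^\ast)$, so that the resulting $k$-basis $\check Y=\{\check y_i\}$ of $A$ is characterized by the defining relation $\lambda(\check y_i y_j)=\delta_{ij}$.

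The key computation is to determine the $(P,P)$-biset structure on $\check Y$. For $u_1,u_2\in P$, $\check y_i\in\check Y$, and $y_j\in Y$, the trace identity $\lambda(ab)=\lambda(ba)$ combined with the interior algebra relation $u\cdot a=\overline u a$ yields
\[
\lambda\bigl((u_1\cdot\check y_i\cdot u_2)y_j\bigr)=\lambda\bigl(\check y_i(u_2\cdot y_j\cdot u_1)\bigr).
\]
Since $Y$ is $P$-invariant, $u_2\cdot y_j\cdot u_1$ lies in $Y$, so the common value above is $1$ precisely when $y_j=u_2^{-1}\cdot y_i\cdot u_1^{-1}$ and $0$ otherwise. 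Hence $u_1\cdot\check y_i\cdot u_2$ is the member of $\check Y$ dual to $u_2^{-1}\cdot y_i\cdot u_1^{-1}\in Y$. This simultaneously shows that $\check Y$ is $P$-invariant and that the bijection $y_i\mapsto\check y_i$ is an isomorphism of $(P,P)$-bisets $Y^\circ\cong\check Y$. Applying Proposition \ref{prop:key_facts}\ref{proppart:invariant_bases_well_defined} to the two $P$-invariant $k$-bases $Y$ and $\check Y$ of the $p$-permutation $k[P\times P]$-module $A$ gives $Y\cong\check Y$, and composing yields $Y\cong Y^\circ$.

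The principal subtlety I anticipate is recognizing that one should not try to describe an explicit bijection $Y\to Y^\circ$ directly, nor to produce $\check Y$ as some natural subset or transform of $Y$ inside $A$. Instead, the symmetrizing form must be used to transport the tautological dual basis of $A^\ast$ back into $A$, producing a third basis whose biset structure is opposite to that of $Y$ by construction. Once $\check Y$ is isolated in this way, the interaction between the trace identity and the interior $P$-action unwinds cleanly, and only bookkeeping with the defining equation $\lambda(\check y_i y_j)=\delta_{ij}$ remains.
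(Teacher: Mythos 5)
Your proof is correct and follows the same route as the paper: construct the $\lambda$-dual basis $\check Y$, compute $u_1\cdot\check y_i\cdot u_2=(u_2^{-1}\cdot y_i\cdot u_1^{-1})^\vee$ to see $\check Y\cong Y^\circ$, and finish with the uniqueness of $P$-invariant bases from Proposition \ref{prop:key_facts}\ref{proppart:invariant_bases_well_defined}. The argument matches the paper's in every essential step.
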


The last well-known ingredient is that algebra direct summands of symmetric algebras are symmetric:

\begin{lemma}\label{ref:corner_algebras_of_symmetric_algebras_are_symmetric}
Let $A$ be a symmetric $k$-algebra with symmetrizing form $\lambda$.  If $e\in A$ is idempotent, then the corner algebra $eAe$ is symmetric with symmetrizing form $\lambda|_{eAe}$.
\begin{proof}
Let $\overline\lambda=\lambda|_{eAe}$.  Clearly $\overline\lambda (xy)=\overline\lambda(yx)$ for all $x,y\in eAe$, so it suffices to show that $\overline\lambda$ contains no nonzero left ideals of $eAe$.

Let $J\subseteq eAe$ be a left ideal of $eAe$ contained in $\ker\lambda$.  As $e$ is the identity element of $eAe$, we have $J=eJe$.  Consider the left idea $AJ$ of $A$ generated by $J$.  Then we have
\[
\lambda(AJ)=\lambda(AeJe)=\lambda(eAeJ)=\lambda(J)=\overline\lambda(J)=0,
\]
so that $AJ$ is a left $A$-ideal contained in $\ker\lambda$.  Thus $J\subseteq AJ=0$, and we have verified that $\overline\lambda$ is a symmetrizing form for $eAe$.
\end{proof}
\end{lemma}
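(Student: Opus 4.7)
The plan is to verify the two defining conditions for $\overline\lambda := \lambda|_{eAe}$ to be a symmetrizing form on $eAe$. The trace condition $\overline\lambda(xy) = \overline\lambda(yx)$ for $x, y \in eAe$ is immediate from the corresponding property of $\lambda$ on $A$, since both values are computed inside $A$. So the substantive step is to show that $\ker\overline\lambda$ contains no nontrivial left ideal of the corner algebra.

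Suppose $J \subseteq eAe$ is a left ideal of $eAe$ with $\overline\lambda(J) = 0$. The strategy is to lift $J$ to the (honest) left ideal $AJ$ of $A$, show that $AJ$ still lies in $\ker\lambda$, apply the hypothesis on $\lambda$ to conclude $AJ = 0$, and then observe $J \subseteq AJ = 0$ (since $j = ej \in AJ$ for every $j \in J$, using that $e$ is the identity of $eAe$). The only content is the middle implication, showing $\lambda(aj) = 0$ for all $a \in A$ and $j \in J$. Here the key manipulation is the identity $j = eje$ combined with the cyclic invariance of $\lambda$, which lets us migrate the idempotent factors from either side of $j$ onto the $a$-factor: schematically, $\lambda(aj) = \lambda(aeje) = \lambda((eae) j)$ after one cyclic rotation. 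Since $eae \in eAe$ and $J$ is a left $eAe$-ideal, $(eae)j$ lies in $J$, so the value is in $\overline\lambda(J) = 0$.

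The main obstacle, such as it is, is bookkeeping: one must make sure the idempotent lands on the correct side so that what remains multiplying $j$ is genuinely an element of $eAe$ (not just of $A$), since that is what activates the left-ideal property of $J$ inside $eAe$. Once this is arranged the argument is purely formal and uses only (a) the cyclic invariance of $\lambda$, (b) the absorbing identity $eje = j$ for $j \in J \subseteq eAe$, and (c) the assumption that no nonzero left $A$-ideal is contained in $\ker\lambda$.
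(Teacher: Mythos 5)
Your proposal is correct and follows the same route as the paper's own proof: pass from the left $eAe$-ideal $J$ in $\ker\overline\lambda$ to the left $A$-ideal $AJ$, use $j = eje$ together with cyclic invariance of $\lambda$ to move the idempotents onto $a$ and show $\lambda(AJ) = 0$, then conclude $AJ = 0$ and hence $J = eJ \subseteq AJ = 0$. The paper phrases the key computation at the level of subsets ($\lambda(AeJe) = \lambda(eAeJ) = \lambda(J)$), while you unpack it elementwise as $\lambda(aj) = \lambda((eae)j)$, but the content is identical.
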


\begin{proof}[Proof of Theorem \ref{thm:main}]
Let $X$ be an $S$-invariant $k$-basis for the block $B$, whose existence is guaranteed by Proposition \ref{prop:block_invariant_basis}.  Proposition \ref{prop:key_facts}\ref{proppart:subsets} implies that $X$ is isomorphic to a $(S,S)$-subbiset of $_SG_S$, which we have already observed to be $\cF$-characteristic.  Bifreeness and $\cF$-generation are clearly properties inherited by sub-$(S,S)$-bisets, so we have verified Conditions \ref{cond:i} and \ref{cond:iii} in the definition of $\cF$-semicharacteristic bisets.  

The group algebra $kG$ is easily seen to be symmetric with symmetrizing form $\lambda:\sum\limits_{g\in G}\alpha_gg\mapsto\alpha_1$.  The block algebra $B=kGb=b(kG)b$ is the group algebra cut by an idempotent, so $B$ is symmetric by Lemma \ref{ref:corner_algebras_of_symmetric_algebras_are_symmetric}.  Therefore $B$ is a symmetric interior $p$-permutation $S$-algebra, so $X$ is symmetric by Proposition \ref{prop:invariant_bases_of_symmetric_algebra_are_symmetric}, and we have satisfied Condition \ref{cond:ii}.

Finally, $X$ is $\cF$-invariant by Proposition \ref{prop:F_stability_of_block_basis}, which verifies Condition \ref{cond:iv}.  This completes the proof that $X$ is $\cF$-semicharacteristic.
\end{proof}

\noindent{\bf \S4.  Some implications.}

\noindent  In \cite{GelvinReehMinimalCharacteristicBisets} it is shown that the monoid of $\cF$-characteristic bisets possesses a natural basis.   In particular, the $S$-invariant $k$-basis $X$ of $B$ decomposes uniquely in terms of this basis, which significantly constrains the $(S,S)$-biset structure of $X$. We recall the characterization of this basis now:

 If $(\psi,Q)$ is a twisted diagonal subgroup of $S\times S$, let $[\psi,Q]$ denote the transitive $(S,S)$-biset that contains an element whose point stabilizer is $(\psi,Q)$.  
 If $\Omega$ is a $\cF$-semicharacteristic biset, Conditions \ref{cond:i} and \ref{cond:iii} imply that
 \[
 \Omega\cong\coprod_{\substack{Q\leq S\\\psi\in\cF(Q,S)}}c_{(\psi,Q)}(\Omega)\cdot[\psi,Q]
 \]
for $c_{(\psi,P)}(\Omega)\in\NN$. 

Let $P\leq S$ be \emph{fully $\cF$-normalized}: The order of $N_S(P)$ is maximal among the orders of the $S$-normalizers of $Q$ when $Q={^gP}\leq S$ for some $g\in G$, or equivalently $N_S(P)\in\Syl_p(N_G(P))$.  Then by \cite[Theorems 4.5 and 5.3]{GelvinReehMinimalCharacteristicBisets} there is a unique $\cF$-semicharacteristic biset $\Omega_P=\Omega_P^\cF$ such that $c_{(\id,P)}(\Omega_P)=1$ and if $Q$ is any fully $\cF$-normalized subgroup with $c_{(\id,Q)}(\Omega_P)\neq 0$ then $Q\cong_\cF P$.  Moreover, if $c_{(\psi,Q)}(\Omega_P)\neq0$ then $(\psi,Q)$ is $\cF\times\cF$-subconjugate to $(\id,P)$, i.e., there exist $\chi\in\cF(Q,P)$ and $\chi'\in\cF(\psi(Q),P)$ such that $\chi=\chi'\circ\psi$.

These $\{\Omega_P\}$, as $P$ ranges over a chosen set $[Cl(\cF)]_{fn}$ of fully $\cF$-normalized representatives of the $\cF$-conjugacy classes of subgroups of $S$, form a basis for the monoid of $\cF$-semicharacteristic bisets.  Thus our arbitrary $\cF$-semicharacteristic biset $\Omega$ can be uniquely written
\[
\Omega\cong\coprod_{P\in[Cl(\cF)]_{fn}}c_P(\Omega)\cdot\Omega_P.
\]
This applies in particular to the case that $\Omega=X$ is the $S$-invariant $k$-basis of $B$.  

Even more information can be obtained through consideration of the Brauer map.  We recall basic well-known facts from the literature without proof; see, e.g., \cite{ThevenazBook} for a full treatment.  

If $A$ is an interior $G$-algebra and $H\leq G$, let $A^H:=\left\{a\in A\ \big|\ h\cdot a\cdot h^{-1}=a\ \forall\ h\in H\right\}$ denote the \emph{$H$-fixed subalgebra}.  If $K\leq H$, let $\textrm{tr}_K^H:A^K\to A^H$ denote the trace map $a\mapsto\sum\limits_{h\in[H/K]}h\cdot a\cdot h^{-1}$, where $[H/K]$ is a chosen set of coset representatives of $H/K$.  The subalgebra $A_\lneq^H:=\sum\limits_{K\lneq H}\textrm{tr}_K^H(A^K)$ is an ideal of $A^H$.  The \emph{Brauer quotient of $A$ at $H$} is $A(H):=A^H/A_\lneq^H$ and the \emph{Brauer map of $A$ at $H$} is the natural quotient $\textrm{br}_H:A^H\to A(H)$.  $A(H)=0$ unless $H$ is a $p$-subgroup of $G$.  

In the special case that $A=kG$ and $P\leq G$ is a $p$-subgroup, $A(P)\cong kC_G(P)$.  This reflects the more general fact that if $A$ is an interior $p$-permutation $G$-algebra, $P\leq G$ is a $p$-subgroup, and $Y$ a $P$-invariant $k$-basis of $A$, then the image of $Y^P:=Y\cap A^P$ under the Brauer map is a $k$-basis for $A(P)$.  In particular, $Y^P\neq\emptyset$ if and only if $A(P)\neq 0$.

If $A^G$ is local (for example, our block algebra $B$), a \emph{defect group} of $A$ is a maximal $p$-subgroup $D\leq G$ such that $A(D)\neq 0$. $D$ is well-defined up to $G$-conjugacy, and if $P\leq G$ is any $p$-subgroup such that $A(P)\neq 0$, then $P$ is $G$-subconjugate to $D$.

Putting all this together, we obtain:

\begin{cor}\label{cor:defect_basis}
Let $X$ be an $S$-invariant $k$-basis of the block algebra $B$ whose defect group $D$ is chosen to lie in $[Cl(\cF)]_{fn}$. Then $X$ contains a copy of $\Omega_P$ only if $P\leq_\cF D$.  Moreover, the number of copies of $\Omega_D$ contained in $X$ is prime to $p$.
\begin{proof}
Suppose that $X$ contains a copy of $\Omega_P$, which in turn contains the $(S,S)$-orbit $[\id,P]$.  As $[\id,P]^P\neq\emptyset$, we have $B(P)\neq0$.  Our characterization of defect groups then implies the first claim.

Observe now that $|S|^2/|P|$ divides $|\Omega_P|$:  The $(S,S)$-orbit $[\psi,Q]$ has order exactly $|S|^2/|Q|$, and the only such orbits that appear in $\Omega_P$ must satisfy that $(\psi,Q)$ is $\cF\times\cF$-subconjugate to $(\id,P)$, so that in particular $|Q|\leq|P|$. By \cite[Theorem 1]{BrauerNotesOnRepresentations}, the greatest power of $p$ that divides  $\dim_k(B)$ is $|S|^2/|D|$.  As all $P\in[Cl(\cF)]_{fn}$ such that $X$ contains a copy of $\Omega_P$ are $\cF$-subconjugate to $D$, and for all such $P$ of order strictly less  than $|D|$ the size of $\Omega_P$ has $p$-part strictly greater than $|S|^2/|D|$, we conclude that there must be a $p'$-number of copies of $\Omega_D$ in $X$. 
\end{proof}
\end{cor}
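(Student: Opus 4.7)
My plan is to exploit the basis decomposition $X\cong\coprod_{P\in[Cl(\cF)]_{fn}} c_P(X)\cdot\Omega_P$ from \cite{GelvinReehMinimalCharacteristicBisets}, together with Theorem \ref{thm:main}, and to handle the two claims by (i) Brauer-map positivity and (ii) $p$-adic orbit counting.

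For the first claim, suppose $c_P(X)\neq 0$, so that $X$ contains a subbiset isomorphic to $\Omega_P$.  By the characterizing property $c_{(\id,P)}(\Omega_P)=1$, the biset $\Omega_P$ contains the transitive orbit $[\id,P]$, and a generator of $[\id,P]$ is fixed by the diagonal action $u\mapsto(u,u)$ of $P$.  Therefore the $P$-fixed subset $X^P$ is nonempty.  Invoking the standard $p$-permutation fact recalled in \S4---that for a $P$-invariant $k$-basis $Y$ of an interior $p$-permutation $G$-algebra $A$, the image $\textrm{br}_P(Y^P)$ is a $k$-basis of $A(P)$---we conclude $B(P)\neq 0$.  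The characterization of the defect group then forces $P$ to be $G$-subconjugate to $D$, and because $P,D\leq S$ and $\cF=\cF_S(G)$, this is the same as $P\leq_\cF D$.

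For the second claim, I would compare $p$-parts in the identity $\dim_k B=|X|=\sum_P c_P(X)\cdot|\Omega_P|$ against Brauer's theorem, which states that the $p$-part of $\dim_k B$ is exactly $|S|^2/|D|$.  Every orbit $[\psi,Q]$ appearing in $\Omega_P$ has $|Q|\leq|P|$ by the $\cF\times\cF$-subconjugacy constraint, hence has size $|S|^2/|Q|$ divisible by $|S|^2/|P|$.  For fully $\cF$-normalized $P$ with $|P|<|D|$ this forces the $p$-part of $|\Omega_P|$ to strictly exceed $|S|^2/|D|$, so those summands contribute only to the higher-order $p$-part of $|X|$.  For $P=D$, the orbit $[\id,D]$ has size exactly $|S|^2/|D|$; granting that $|\Omega_D|$ has $p$-part exactly $|S|^2/|D|$, matching the $p$-parts on the two sides then forces $c_D(X)$ to be coprime to $p$.

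The step I expect to be the main obstacle is verifying that the $p$-part of $|\Omega_D|$ is exactly $|S|^2/|D|$ and not strictly larger.  Writing $|\Omega_D|=(|S|^2/|D|)\cdot(m + p\cdot(\cdots))$, where $m$ counts the $(S,S)$-orbits of minimal size $|S|^2/|D|$ inside $\Omega_D$, the claim reduces to showing $m$ is coprime to $p$.  Since the orbit $[\id,D]$ appears with multiplicity one by construction and every other minimal-size orbit $[\psi,Q]\subseteq\Omega_D$ forces $Q$ to be $\cF$-isomorphic to $D$, this coprimality should be extractable from the explicit description of $\Omega_D$ in \cite[Theorems 4.5, 5.3]{GelvinReehMinimalCharacteristicBisets}; once it is in hand, the $p$-part comparison above closes the proof.
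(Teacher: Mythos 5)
Your first claim matches the paper exactly. For the second claim, you have the right strategy, but the step you flag as the ``main obstacle''---verifying that the $p$-part of $|\Omega_D|$ is exactly $|S|^2/|D|$---does not need an independent proof and is not what the paper argues. Write $v$ for the $p$-adic valuation of $|S|^2/|D|$. You have already shown that every term $c_P(X)\cdot|\Omega_P|$ with $c_P(X)\neq 0$ and $|P|<|D|$ has $p$-adic valuation strictly greater than $v$, and that $|\Omega_D|$ has valuation at least $v$. Since the summands of $\dim_k B = \sum_P c_P(X)\cdot|\Omega_P|$ are nonnegative integers, there is no cancellation: if every summand had valuation strictly greater than $v$, so would the total, contradicting Brauer's theorem. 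Hence the $P=D$ term must have valuation exactly $v$, and since that valuation equals $v_p\bigl(c_D(X)\bigr)+v_p\bigl(|\Omega_D|\bigr)$ with $v_p\bigl(c_D(X)\bigr)\geq 0$ and $v_p\bigl(|\Omega_D|\bigr)\geq v$, both must be tight: $p\nmid c_D(X)$, and, as a by-product rather than a prerequisite, $v_p\bigl(|\Omega_D|\bigr)=v$. So the orbit-counting analysis of $\Omega_D$ you sketched is superfluous; the paper's proof closes exactly where you stopped, by this valuation comparison.
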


In particular, Corollary \ref{cor:defect_basis} implies that $B$ has an $\cF$-characteristic $S$-invariant $k$-basis if and only $B$ is of maximal defect, as claimed at the end of \S2.

Corollary \ref{cor:defect_basis} can also be seen to imply some well-established facts in the literature:
\begin{itemize}
\item In \cite[Theorem 3]{GreenRemarksDefectGroups} it is proved that a defect group $D$ of $B$ is a Sylow intersection subgroup, $D=S\cap{S^g}$, and that moreover $g$ can be chosen to lie in $C_G(D)$.  
\begin{itemize}
\item An $S$-invariant $k$-basis of $B$ contains $\Omega_D$, which contains $[\id,D]$, so by Proposition \ref{prop:key_facts}\ref{proppart:subsets} $_SG_S$ must contain an element $g$ with stabilizer $(\id,D)$.  As already noted, $\Stab(g)=(c_g,S\cap{S^g})$, from which the result follows.
\end{itemize}
\item Alperin and Green (each crediting the other, cf. \cite[\S 6]{AlperinFusion} and \cite[Theorem 4]{GreenRemarksDefectGroups}) show that, in our terminology, $D$ can be chosen to be fully $\cF$-normalized in $S$.
\begin{itemize}
\item $D$ is $G$-conjugate to $P\in[Cl(\cF)]_{fn}$, which is by definition fully $\cF$-normalized.
\end{itemize}
\end{itemize}

The sketched proofs we offer for these basic facts are morally the same as those found in \cite{GreenRemarksDefectGroups}, so we will not elaborate further.  These points are raised mainly because we find it interesting that the proofs are essentially contained in the characterization of the $\cF$-semicharacteristic biset basis $\{\Omega_P\}$.

We conclude by using Theorem \ref{thm:main} to give an new perspective on some well-known results in block theory (see, e.g., \cite[Corollary 3.11]{FeitRepresentationTheory}).  We say that $G$ is of \emph{characteristic $p$} if $C_G(O_p(G))\leq O_p(G)$, and is of \emph{local characteristic $p$} if $N_G(P)$ is of characteristic $p$ for all nonidentity $p$-subgroups $P\leq G$.

\begin{cor}
If $G$ is of characteristic $p$, then $kG$ has a unique block.  If $G$ is of local characteristic $p$, then all nonprincipal blocks of $kG$ are of defect $0$. 
\begin{proof}
The first claim follows from \cite[Theorem 6.7]{GelvinReehMinimalCharacteristicBisets}, which implies that if $G$ is of characteristic $p$ then ${_SG_S}\cong\Omega_S$ as $(S,S)$-bisets.  In this case, $_SG_S$  cannot be broken into smaller $\cF$-semicharacteristic bisets, so the principal block $B_0$ must have an $S$-invariant $k$-basis isomorphic to $_SG_S$.  It follows that $B_0=kG$, and the claim is proved.

More generally, let $B$ be a block of $kG$ with defect group $D\in[CL(\cF)]_{fn}$ (which may be trivial).  By Corollary \ref{cor:defect_basis}, the $S$-invariant $k$-basis $X$ of $B$ must contain a copy of $\Omega_D$, which in turn implies that there is some $g\in G$ with point-stabilizer $(\id,D)$.  This element must satisfy $D=S\cap{S^g}$ and $g\in C_G(D)\leq N_G(D)$.

As $D\leq S$ was chosen to be fully $\cF$-normalized, we have $N_S(D)\in\Syl_p(N_G(D))$.  Thus ${O_p(N_G(D))\leq{N_S(D)\cap{N_S(D)^g}}\leq S\cap{S^g}}=D$.  As $D$ is clearly a normal $p$-subgroup of $N_G(D)$, it follows that $O_p(N_G(D))=D$.  Thus $g\in C_G(O_p(N_G(D))$.

If we assume now that $G$ is of local characteristic $p$, the above implies that must have either $D=1$ or $g\in O_p(N_G(D))= D$.  In the first case $B$ is a block of defect $0$.  Thus we may assume $g\in D=S\cap{S^g}$, so that $D=S$.  By Corollary \ref{cor:defect_basis} again, we see that every block of positive defect has an $S$-invariant $k$-basis that contains a copy of $\Omega_S$, and so in particular an $(S,S)$-orbit isomorphic to $[\id,S]$.  The same argument as above shows that $_SG_S$ has a unique such orbit, which is already accounted for in the $S$-invariant $k$-basis of the principal block $B_0$.  Thus we see that if $G$ is of local characteristic $p$, any block of $kG$ with positive defect must be the principal block, and the result is proved.
\end{proof}
\end{cor}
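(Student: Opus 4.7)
The plan is to exploit the additive decomposition $\coprod_j X_j \cong {}_SG_S$ of $(S,S)$-bisets coming from the block decomposition $kG=\bigoplus_j B_j$, together with the uniqueness of expansion in the basis $\{\Omega_P\}$ of the monoid of $\cF$-semicharacteristic bisets.  Each $X_j$ is $\cF$-semicharacteristic by Theorem \ref{thm:main}, so additivity of multiplicities yields $\sum_j c_P(X_j) = c_P({}_SG_S)$ for every $P\in[Cl(\cF)]_{fn}$; controlling the right-hand side will control which blocks can appear and with which defect.

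For the first claim, I would invoke \cite[Theorem 6.7]{GelvinReehMinimalCharacteristicBisets}, which asserts that when $G$ is of characteristic $p$ one has ${}_SG_S \cong \Omega_S$.  Then $c_P({}_SG_S) = \delta_{P,S}$, so exactly one of the $X_j$ absorbs the single $\Omega_S$-summand and all others must be empty; since a block algebra is necessarily nonzero, there is only one block, which must equal $kG$.

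For the second claim, I would take a block $B$ with fully $\cF$-normalized defect group $D\ne 1$.  Corollary \ref{cor:defect_basis} forces the $S$-invariant basis $X$ to contain a copy of $\Omega_D$, and hence an $[\id,D]$-orbit, which by Proposition \ref{prop:key_facts}\ref{proppart:subsets} arises from some $g\in G$ with stabilizer $(\id,D)$, i.e., $D=S\cap S^g$ and $g\in C_G(D)$.  Since $g\in N_G(D)$ normalizes the characteristic subgroup $O_p(N_G(D))\le N_S(D)\le S$ and $N_S(D)\in\Syl_p(N_G(D))$ by full $\cF$-normalization, one obtains $O_p(N_G(D))\le S\cap S^g = D$, and the reverse inclusion is automatic; hence $O_p(N_G(D))=D$.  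The local characteristic $p$ hypothesis applied to $N_G(D)$ then gives $g\in C_{N_G(D)}(D)\le D$, so $g\in S$ and $D=S$: every block of positive defect has full defect.  To finish, I would apply local characteristic $p$ again to $N_G(S)$ to get $C_G(S)\le O_p(N_G(S))=S$, whence $C_G(S)=Z(S)$ and ${}_SG_S$ carries exactly one $[\id,S]$-orbit; combining with the standard fact that $B_0$ has defect group $S$, this unique orbit must sit inside $X_0$, so no other block can have full defect and the remaining blocks are of defect $0$.

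The main obstacle I anticipate is the step that separates the combinatorial uniqueness of the $[\id,S]$-orbit in ${}_SG_S$ from its assignment to a specific block basis.  The biset bookkeeping only establishes that precisely one $X_j$ harbors that orbit; attributing it to $X_0$ rests on the external input that the principal block has full defect.  Beyond that point, everything reduces to tracking coefficients in the free commutative monoid on $\{\Omega_P\}$, combined with the characteristic $p$ manipulations of $O_p(N_G(D))$ outlined above.
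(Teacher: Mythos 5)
Your proposal follows essentially the same route as the paper: the first claim via Theorem 6.7 of \cite{GelvinReehMinimalCharacteristicBisets} and uniqueness of the $\{\Omega_P\}$-decomposition, and the second via Corollary \ref{cor:defect_basis}, the Sylow intersection $D=S\cap S^g$ with $g\in C_G(D)$, the identification $O_p(N_G(D))=D$ using full $\cF$-normalization, and then local characteristic $p$ to force $g\in D$ and hence $D=S$, concluding by uniqueness of the $[\id,S]$-orbit in ${}_SG_S$. You are also right, and in fact more explicit than the paper, that the final attribution of that orbit to $X_0$ uses the standard fact that the principal block has full defect.
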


\bibliography{Sources}
\bibliographystyle{alphanum}

\end{document}